\newcommand\cyr{%
\renewcommand\rmdefault{wncyr}%
\renewcommand\sfdefault{wncyss}%
\renewcommand\encodingdefault{OT2}%
\normalfont \selectfont} \DeclareTextFontCommand{\textcyr}{\cyr}
\newcommand{\be}{\begin{equation}}
\newcommand{\ee}{\end{equation}}
\newcommand{\bes}{\begin{equation*}}
\newcommand{\ees}{\end{equation*}}
\newcommand{\N}{\mathbb{N}}
\newcommand{\R}{\mathbb{R}}
\newcommand{\X}{\mathbb{X}}
\newcommand{\cB}{\mathcal{B}}
\newcommand{\cH}{\mathcal{H}}
\newcommand{\cF}{\mathcal{F}}
\newcommand{\mcC}{\mathcal{C}}
\newcommand{\cL}{\mathcal{L}}
\newcommand{\cP}{\mathcal{P}}
\newcommand{\sI}{\mathscr{I}}
\newcommand{\sfX}{\mathsf{X}}
\newcommand{\sfY}{\mathsf{Y}}
\newcommand{\cC}{\mathcal{C}}
\newcommand{\ml}{\vskip 5pt\noindent}
\renewcommand{\rmdefault}{cmr} 
\renewcommand{\sfdefault}{cmr} 
\newtheorem{theorem}{Theorem}[section]
\theoremstyle{plain}
\newtheorem{definition}{Definition}[section]
\newtheorem{example}{Example}[section]
\newtheorem{proposition}{Proposition}[section]
\newtheorem{remark}{Remark}[section]
\newtheorem{remarks}{Remarks}[section]
\newtheorem*{notation*}{Notation}
\numberwithin{equation}{section}
\DeclareMathOperator\Lip{Lip}
\DeclareMathOperator\esssup{{ess\, sup}}
\newcommand{\st}{ : }
\begin{document}

\title[Non-stationary Fractal Interpolation]{Non-stationary Fractal Interpolation}

\author{Peter R. Massopust}
\address{Centre of Mathematics, Technical University of Munich, Boltzmannstr. 3, 85748 Garching b. Munich, Germany}
\email{massopust@ma.tum.de}

\begin{abstract}
We introduce the novel concept of a non-stationary iterated function system by considering a countable sequence of distinct set-valued maps $\{\cF_k\}_{k\in \N}$ where each $\cF_k$ maps $\cH(X)\to \cH(X)$ and arises from an iterated function system. Employing the recently developed theory of non-stationary versions of fixed points \cite{LDV} and the concept of forward and backward trajectories, we present new classes of fractal functions exhibiting different local and global behavior, and extend fractal interpolation to this new, more flexible setting.
\end{abstract}
\keywords{Iterated function system (IFS), attractor, fractal interpolation, non-stationary IFS, non-stationary fractal interpolation}%
\subjclass{28A80, 37C35}

\maketitle 

\section{Introduction}

Contractive operators on complete function spaces play an important role in the theory of differential and integral equations and are fundamental for the development of iterative solvers. One class of contractive operators is defined on the graphs of functions using a special type of iterated function system (IFS). The fixed point of such an IFS is the graph of a function that exhibits fractal characteristics. There is a vast literature on IFSs and fractal functions including, for instance, \cite{B1,massopust,massopust1}.

Up to now, the construction of contractive operators on sets or functions uses primarily sequences of iterates of {one} operator. Recently, motivated by non-stationary subdivision algorithms, a more general class of sequences consisting of different contractive operators was introduced in \cite{LDV}  and their limit properties studied. These ideas were then extended in \cite{DLM} to sequences of different contractive operators mapping between different spaces. Using different contractive operators provides one with the ability to construct limit attractors that have different shapes or features at different scales. 

This article uses the aforementioned new ideas to introduce the novel concept of non-stationary IFS and non-stationary fractal interpolation. These new ideas widen the applicability of fractal functions and fractal interpolation as they now include scale and location dependent features.

The outline of this paper is as follows. After providing some necessary preliminaries in Section 2, some results from \cite{LDV} are presented in Section 3. In Section 4, (stationary) fractal interpolation and the associated (stationary) IFSs are reviewed. Non-stationary fractal functions are constructed in Section 5 and non-stationary fractal interpolation is introduced in Section 6. The final Section 7 defines non-stationary fractal functions on the Bochner-Lebesgue $\cL^p$-spaces with $0 < p \leq \infty$. 

\section{Preliminaries}
Let $(X,d)$ be a complete metric space. For a map $f: X \to X$, we define the Lipschitz constant associated with $f$ by
\[
\Lip (f) = \sup_{x,y \in X, x \neq y} \frac{d\big(f(x),f(y)\big)}{d(x,y)}.
\]
A map $f$ is said to be Lipschitz if $\Lip (f) < + \infty$ and a contraction on $X$ if $\Lip (f) < 1$.
\begin{definition}
Let $(X,d)$ be a complete metric space and $\cF := \{f_1, \ldots, f_n\}$ a finite family of contractions on $X$. Then the pair $(X, \cF)$ is called a contractive iterated function system (IFS) on $X$.
\end{definition}
\begin{remarks}\hfill
\begin{enumerate}
\item[\emph{(a)}] As we deal exclusively with contractive IFSs in this article, we drop the adjective ``contractive'' in the following.
\item[\emph{(b)}] In order to avoid trivialities, we henceforth assume that the number of maps in an IFS is an integer greater than 1.
\end{enumerate}
\end{remarks}
With an IFS $(X,\cF)$ and its point maps $f\in \cF$, we can associate a set-valued mapping, also denoted by $\cF$, as follows. Let $(\cH(X), h)$ be the hyperspace of all nonempty compact subsets of $X$ endowed with the Hausdorff metric 
\[
h (S_1, S_2) := \max\{d(S_1, S_2), d(S_2, S_1)\},
\]
where $d(S_1,S_2) := \sup\limits_{x\in S_1} d(x, S_2) := \sup\limits_{x\in S_1}\inf\limits_{y\in S_2} d(x,y)$. 

Define the mapping $\cF: \cH(X)\to \cH(X)$ by \cite{B1,H}
\be\label{1.1}
\cF (S) := \bigcup_{i=1}^n f_i (S).
\ee
It is known that for contractive mappings $f\in \cF$, the set-valued map $\cF$ defined by \eqref{1.1} is a contractive Lipschitz map on $\cH(X)$ with Lipschitz constant $\Lip (\cF) = \max \{\Lip (f_i) : i\in\N_n\}$. Here, we set $\N_n:=\{1, \ldots, n\}$. Moreover, the completeness of $(X,d)$ implies the completeness of $(\cH(X), h)$. 

The next definition is motivated by the validity of the Banach Fixed Point Theorem in the above setting.

\begin{definition}
The unique fixed point $A\in \cH(X)$ of the contractive set-valued map $\cF$ is called the attractor of the IFS $(X,\cF)$.
\end{definition}

Note that since $A$ satisfies the \emph{self-referential equation}
\be
A = \cF(A) = \bigcup_{i=1}^n f_i (A),
\ee
the attractor is in general a fractal set.

It follows directly from the proof of the Banach Fixed Point Theorem that the attractor $A$ is obtained as the limit (in the Hausdorff metric) of the iterative process $A_k := \cF(A_{k-1})$, $k\in \N$:
\be
A = \lim_{k\to\infty} A_k = \lim_{k\to\infty} \cF^k (A_0),
\ee
for an arbitrary $A_0\in \cH(X)$. Here, $\cF^k$ denotes the $k$-fold composition of $\cF$ with itself.

We refer to the element $A_k\in \cH(X)$ as the \emph{$k$-th level approximant of $A$} or as a \emph{pre-fractal of rank $k$} \cite{massopust}.

\section{Systems of Function Systems (SFS)}

In \cite{LDV}, a generalization of IFSs was presented. The idea for this generalization comes from the theory of subdivision schemes. Instead of using only one set-valued map $\cF$ to obtain an iterative process $\{A_n\}_{n\in \N}$ with initial $A_0 \in \cH(X)$, a sequence of function systems consisting of different families $\cF$ is considered. 

To this end, let $(X,d)$ be a complete metric space and let $\{T_k\}_{k\in \N}$ be a sequence of transformations $T_k:X\to X$.

\begin{definition}{\cite[Definition 3.6]{LDV}}
Let $\{T_k\}_{k\in \N}$ be a sequence of transformations $T_k:X\to X$. A subset $\sI$ of $X$ is called an invariant set of the sequence $\{T_k\}_{k\in \N}$ if
\[
\forall\,k\in \N\;\forall\,x\in \sI: T_k (x)\in \sI.
\]
\end{definition}
A criterion for obtaining an invariant domain for a sequence $\{T_k\}_{k\in \N}$ of transformations on $X$ is given below.

\begin{proposition}{ \cite[Lemma 3.7]{LDV}}\label{prop2.1}
Let $\{T_k\}_{k\in \N}$ be a sequence of transformations on $(X,d)$. Suppose there exists a $q\in X$ such that for all $x\in X$
\[
d(T_k(x),q) \leq \mu\,d(x,q) + M,
\]
for some $\mu\in [0,1)$ and $M> 0$. Then the ball $B_r(q)$ of radius $r = M/(1-\mu)$ centered at $q$ is an invariant set for $\{T_k\}_{k\in \N}$.
\end{proposition}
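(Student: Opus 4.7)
The plan is to verify the defining invariance property directly from the given bound: pick an arbitrary $x$ in the closed (or open) ball $B_r(q)$ with $r = M/(1-\mu)$, apply the hypothesis to estimate $d(T_k(x),q)$ in terms of $d(x,q)$, and check that the resulting upper bound is again at most $r$. This reduces the proposition to a single affine manipulation in one variable, so no iteration, no completeness of $X$, and no Hausdorff-space structure is needed; only the numerical inequality matters.

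Concretely, I would fix $k \in \N$ and $x \in B_r(q)$, so $d(x,q) \leq r$. Plugging into the hypothesis gives
\[
d(T_k(x),q) \;\leq\; \mu\, d(x,q) + M \;\leq\; \mu r + M.
\]
The remaining step is to observe that the particular choice $r = M/(1-\mu)$ is exactly the fixed point of the affine map $t \mapsto \mu t + M$ on $\R$, so $\mu r + M = r$. Hence $d(T_k(x),q) \leq r$, showing $T_k(x) \in B_r(q)$. Since $k$ and $x$ were arbitrary, Definition~3.1 (the invariance definition) is satisfied and $B_r(q)$ is an invariant set for $\{T_k\}_{k \in \N}$.

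There is essentially no obstacle: the only subtle point is conceptual rather than technical, namely recognizing that the radius $r$ in the statement is precisely the fixed point of the scalar contraction $t \mapsto \mu t + M$ (which exists because $\mu \in [0,1)$), so that the ball of that radius is automatically stabilized by each $T_k$. In writing this up one should specify the convention used for $B_r(q)$ (open vs.\ closed ball); either works with the same proof, since the estimate gives $d(T_k(x),q) \leq r$ whenever $d(x,q) \leq r$, and strict inequality is preserved when $M$ is replaced accordingly. No further hypotheses on the $T_k$ (such as continuity or contractivity) are invoked.
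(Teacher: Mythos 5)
Your argument is correct and is precisely the standard verification behind \cite[Lemma 3.7]{LDV}, which this paper does not reproduce (its ``proof'' simply refers the reader to \cite{LDV}): since $r=M/(1-\mu)$ is the fixed point of $t\mapsto \mu t+M$, any $x$ with $d(x,q)\le r$ satisfies $d(T_k(x),q)\le \mu r+M=r$, which is exactly the invariance condition in the definition. Just take $B_r(q)$ to be the \emph{closed} ball (this is also what is needed later, where the invariant set must be closed); note that for the open ball your parenthetical remark fails in the edge case $\mu=0$, where the estimate gives only $d(T_k(x),q)\le M=r$ and not a strict inequality.
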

\begin{proof}
For the proof, we refer the interested reader to \cite{LDV}.
\end{proof}
Now suppose that $\{\cF_k\}_{k\in \N}$ is a sequence of set-valued maps $\cF_k:\cH(X)\to \cH(X)$ defined by
\be\label{2.1}
\cF_k (A_0) := \bigcup_{i=1}^{n_k} f_{i,k} (A_0), \quad A_0\in \cH(X),
\ee
where $\cF_k = \{f_{i,k}: i\in \N_{n_k}\}$ is a family of contractions constituting an IFS on a complete metric space $(X,d)$. Setting $s_{i,k} := \Lip (f_{i,k})$, we obtain that $\Lip (\cF_k) = \max\{s_{i,k}: i\in \N_{n_k}\} < 1$.

The following definitions are taken from \cite[Section 4]{LDV}.

\begin{definition}
Let $A_0\in \cH(X)$. The sequences
\be
\Phi_k (A_0) := \cF_k\circ\cF_{k-1} \circ \cdots \circ \cF_1 (A_0) 
\ee
and
\be
\Psi_k (A_0) := \cF_1\circ\cF_{2} \circ \cdots \circ \cF_k (A_0)
\ee
are called the forward and backward trajectories of $A_0$, respectively.
\end{definition}

\noindent
For our current setting, it was shown in \cite[Corollary 4.2]{LDV} that if 
\begin{enumerate}
\item[(i)] $n := n_k$, for all $k\in \N$;
\item[(ii)] there exists a common nonempty compact invariant set $\sI\subseteq X$ for the maps $\{f_{i,k}\}$, $i\in\N_n$, $k\in \N$, such that $\{f_{i,k}\}_{k\in \N}$ converges uniformly on $\sI$ to $f_i$ as $k\to\infty$;
\item[(iii)] the IFS $(X, \cF)$ with $\cF = \{f_i : i \in\N_n\}$ is contractive on $(X,d)$,
\end{enumerate}
then the forward trajectory $\{\Phi_k(A_0)\}$ converges for an arbitrary $A_0\subseteq \sI$ to the unique attractor of $(X, \cF)$.

It was observed in \cite{LDV} that the limits of forward trajectories do not lead to new classes of fractals. On the other hand, backward trajectories converge under rather mild conditions, even when forward trajectories do not converge to a (contractive) IFS, and generate new types of fractal sets.

As the convergence of backward trajectories is important for this article, we summarize the result in the next theorem whose proof the reader can find in  \cite{LDV}.

\begin{theorem}{\cite[Corollary 4.4]{LDV}}\label{th2.1}
Let $\{\cF_k\}_{k\in\N}$ be a family of set-valued maps of the form \eqref{2.1} whose elements are collections $\cF_k = \{f_{i,k}: i\in \N_{n_k}\}$ of contractions constituting IFSs on a complete metric space $(X,d)$. Suppose that 
\begin{enumerate}
\item[\emph{(i)}] there exists a nonempty closed invariant set $\sI\subseteq X$ for $\{f_{i,k}\}$, $i\in \N_{n_k}$, $k\in \N$;
\item[\emph{(ii)}] and
\be\label{lipcond}
\sum_{k=1}^\infty\prod_{j=1}^k \Lip (\cF_j) < \infty.
\ee
\end{enumerate}
Then the backward trajectories $\{\Psi_k (A_0)\}$ converge for any initial $A_0\subseteq \sI$ to a unique attractor $A\subseteq \sI$.
\end{theorem}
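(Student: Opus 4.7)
The plan is to verify that $\{\Psi_k(A_0)\}$ is a Cauchy sequence in $(\cH(X),h)$ and hence converges; closedness of $\sI$ will then guarantee the limit lies in $\sI$. The fundamental input is that the composition $\cF_1 \circ \cdots \circ \cF_j$ is Lipschitz on $(\cH(X),h)$ with constant $\prod_{\ell=1}^j \Lip(\cF_\ell)$, since each $\cF_\ell$ is. Writing
\[
\Psi_{j+1}(A_0) = \cF_1 \circ \cdots \circ \cF_j(\cF_{j+1}(A_0)) \quad\text{and}\quad \Psi_j(A_0) = \cF_1 \circ \cdots \circ \cF_j(A_0),
\]
I obtain the basic one-step bound
\[
h\bigl(\Psi_{j+1}(A_0),\Psi_j(A_0)\bigr) \le \prod_{\ell=1}^j \Lip(\cF_\ell)\cdot h\bigl(\cF_{j+1}(A_0), A_0\bigr).
\]

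The next step is to secure a uniform bound $h(\cF_{j+1}(A_0), A_0) \le M$ for every $j\in\N$. This is where hypothesis (i) is essential: because $A_0\subseteq \sI$ and $\sI$ is invariant under every $f_{i,k}$, we have $\cF_{j+1}(A_0) \subseteq \sI$ for all $j$, so both compact sets live in a common closed region. In the typical application, $\sI$ is produced bounded (as in Proposition \ref{prop2.1}), and $M$ may be taken to be its diameter; more generally one combines the Lipschitz estimate with a fixed base point $p\in A_0$ and uses (ii) to control the orbit $\{f_{i,k}(p)\}_{i,k}$. Once $M$ is in hand, telescoping yields, for $m>k$,
\[
h\bigl(\Psi_m(A_0),\Psi_k(A_0)\bigr) \le M \sum_{j=k}^{m-1} \prod_{\ell=1}^j \Lip(\cF_\ell),
\]
which by (ii) is the tail of a convergent series and tends to $0$ as $k\to\infty$. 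Hence $\{\Psi_k(A_0)\}$ is Cauchy, and by completeness of $(\cH(X),h)$ it converges to some $A\in\cH(X)$; since each $\Psi_k(A_0)\subseteq \sI$ and $\sI$ is closed, $A\subseteq \sI$.

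For uniqueness of the limit, given any second initial set $A_0'\subseteq \sI$ with corresponding limit $A'$, the fundamental Lipschitz estimate gives
\[
h\bigl(\Psi_k(A_0),\Psi_k(A_0')\bigr) \le \prod_{\ell=1}^k \Lip(\cF_\ell)\cdot h(A_0,A_0'),
\]
and the summability (ii) in particular forces $\prod_{\ell=1}^k \Lip(\cF_\ell)\to 0$ (general term of a convergent series). Letting $k\to\infty$ gives $A=A'$, so the limit depends only on $\{\cF_k\}_{k\in\N}$. I expect the main obstacle to be the uniform bound $M$ in the closed-but-unbounded case; everything else is a routine Cauchy argument in $\cH(X)$, with hypothesis (ii) supplying exactly the summability needed for both convergence and uniqueness.
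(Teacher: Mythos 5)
Your strategy is the same one behind the result the paper cites (the paper itself gives no proof, deferring to \cite{LDV}): the one-step estimate $h(\Psi_{j+1}(A_0),\Psi_j(A_0))\le \prod_{\ell=1}^{j}\Lip(\cF_\ell)\,h(\cF_{j+1}(A_0),A_0)$, telescoping plus completeness of $(\cH(X),h)$, closedness of $\sI$ to conclude $A\subseteq\sI$, and the observation that $\prod_{\ell=1}^{k}\Lip(\cF_\ell)\to 0$ (general term of the convergent series \eqref{lipcond}) to get independence of the limit from $A_0$. All of those steps are correct.

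The genuine gap is exactly the point you flagged and then waved at: the uniform bound $h(\cF_{j+1}(A_0),A_0)\le M$. With $\sI$ merely closed, as in the statement, no such bound exists, and your fallback -- that one can ``use (ii) to control the orbit $\{f_{i,k}(p)\}$'' -- cannot be made to work, because \eqref{lipcond} constrains only the contraction factors and says nothing about how far the maps translate points. Concretely, take $X=\R$, $f_{1,k}(x)=x/2+4^{k}$, $f_{2,k}(x)=x/2+4^{k}+1$, and $\sI=\R$ (closed, invariant); then $\Lip(\cF_k)=1/2$ and \eqref{lipcond} holds, yet every point of $\Psi_k(\{0\})$ is at least $4^{k}/2^{k-1}=2^{k+1}$, so the backward trajectories diverge. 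What the telescoping argument actually requires is boundedness of $\sI$: in \cite{LDV} the invariant set is assumed compact, and Remarks \ref{rem2.1}(a) relaxes compactness to closedness only because, in this paper's applications, $\sI$ is a closed ball produced via Proposition \ref{prop2.1}, so that $M$ can be taken as $\mathrm{diam}\,\sI$ (closedness then being what is needed for the limit to stay in $\sI$). So your proof is complete precisely in the bounded case you describe, and the remaining case cannot be repaired, since the statement with an unbounded closed $\sI$ is false; the honest fix is to add boundedness of $\sI$ (or an assumption like $\sum_k \prod_{j\le k}\Lip(\cF_j)\,h(\cF_{k+1}(A_0),A_0)<\infty$) and then your argument goes through verbatim.
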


\begin{remarks}\label{rem2.1}\hfill
\begin{enumerate}
\item[\emph{(a)}] In \cite[Proposition 3.11]{LDV}, it is required that the invariant set $\sI$ be compact. However, it suffices to only require that $\sI$ is closed as $(X,d)$ is complete. (See the proof of Proposition 3.11 in \cite{LDV}.)
\item[\emph{(b)}] The conditions for convergence of the forward and backward trajectories are more general in \cite{LDV}. For our purposes and setting, the above criteria are however sufficient.
\item[\emph{(c)}] Fractals generated by backwards trajectories allow for more flexibility in their shapes. By a proper choice of IFSs, one can construct fractals exhibiting different local behavior. (Cf. \cite{LDV}.) This is due to the fact that in the sequence
\[
\cF_1\circ\cF_2\circ \cdots \cF_{k-1}\circ\cF_k (A_0), \quad A_0\in H(X),
\]
the global shape of the attractor is determined by the initial maps $\cF_1\circ \cF_2\ldots$, whereas the local shape is given by the final maps $\cF_{k-1}\circ\cF_k\ldots$. Thus, scaling the attractor by $\Lip (\Psi_k)$, $\Psi_k = \cF_1\circ\cF_2\circ \cdots \cF_{k-1}\circ\cF_k$, reveals the behavior of the attractor of $\{\cF_m\}_{m > k}$. See also, \cite[Example 5.1]{LDV}.
\item[\emph{(d)}] A comparison to $V$-variable fractals \cite{BHS} was also undertaken in \cite[Section 4.1]{LDV}, showing that SFSs have weaker prerequisites than $V$-variable fractals.
\end{enumerate}
\end{remarks}

\section{Fractal Interpolation}

Before introducing the new concept of \emph{non-stationary fractal interpolation}, we need to briefly recall the rudimentaries of (stationary) fractal interpolation and (stationary) fractal functions. This is the purpose of the current section.

\subsection{Stationary Fractal Interpolation}
Suppose we are given a finite family $\{l_i\}_{i = 1}^{n}$ of injective contractions $X\to X$ generating a partition of $X$ in the sense that
\begin{align}
&X = \bigcup_{i=1}^n l_i(X);\label{part1}\\
&l_i(X)\cap l_j(X) = \emptyset, \quad\forall\;i, j\in \N_n, i\neq j.\label{part2} 
\end{align}

Let $(Y,d_Y)$ be a complete metric space with metric $d_Y$. A mapping $g:X\to Y$ is called \emph{bounded} (with respect to the metric $d_Y$) if there exists an $M> 0$ so that for all $x_1, x_2\in X$, $d_Y(g(x_1),g(x_2)) < M$.

Recall that the set $\cB(X, Y) := \{g : X\to Y : \text{$g$ is bounded}\}$ when endowed with the metric 
\be\label{d}
d(g,h): = \displaystyle{\sup_{x\in X}} \,d_Y(g(x), h(x))
\ee
becomes a complete metric space.

\begin{remark}
Under the usual addition and scalar multiplication of functions, the space $\cB(X,Y)$ becomes actually a metric linear space, i.e., a vector space under which the operations of vector addition and scalar multiplication are continuous. (See, for instance, \cite{R}.)
\end{remark}

For $i\in\N_n$, let $F_i: X\times Y \to Y$ be a mapping which is uniformly contractive in the second variable, i.e., there exists a $c\in [0,1)$ so that for all $y_1, y_2\in Y$
\be\label{scon}
d_Y (F_i(x, y_1), F_i(x, y_2)) \leq c\, d_Y (y_1, y_2), \quad\forall x\in X,\,\forall i \in\N_n.
\ee
Define an operator $T: \cB(X,Y)\to \cB(X,Y)$, by
\be\label{RB}
T g (x) := \sum\limits_{i=1}^n F_i (l_i^{-1} (x), g\circ l_i^{-1} (x))\,\chi_{l_i(X)}(x), 
\ee
where $\chi_M$ denotes the characteristic function of a set $M$. Such operators are referred to as \emph{Read-Bajractarevi\'c (RB)} operators. The operator $T$ is well-defined and since $g$ is bounded and each $F_i$ contractive in the second variable, $T g\in \cB(X,Y)$.

Equivalently, \eqref{RB} can also be written in the form
\be\label{3.3}
(T g \circ l_i) (x) := F_i (x, g(x)),\quad x\in X, \;i\in\N_n. 
\ee

Moreover, \eqref{scon} implies that $T$ is contractive on $\cB(X, Y)$:
\begin{align}\label{estim}
d(T g, T h) & = \sup_{x\in X} d_Y (T g (x), T h (x))\nonumber\\
& = \sup_{x\in X} d_Y (F(l_i^{-1} (x), g(l_i^{-1} (x))), F(l_i^{-1} (x), h(l_i^{-1} (x))))\nonumber\\
& \leq c\sup_{x\in X} d_Y (g\circ l_i^{-1} (x), h \circ l_i^{-1} (x)) \leq c\, d_Y(g,h).
\end{align}
To achieve notational simplicity, we set $F(x,y):= \sum\limits_{i=1}^n F_i (x, y)\,\chi_{X}(x)$ in the above equation. 

Therefore, by the Banach Fixed Point Theorem, $T$ has a unique fixed point $f^*$ in $\cB(X,Y)$. This unique fixed point is called the \emph{bounded fractal function} (generated by $T$) and it satisfies the \emph{self-referential equation}
\be\label{eq2.8}
f^*(x) = \sum\limits_{i=1}^n F_i (l_i^{-1} (x), f^*\circ l_i^{-1} (x))\,\chi_{l_i(X)}(x),
\ee
or, equivalently,
\be
f^*\circ l_i (x) = F_i (x, f^*(x)),\quad x\in X, \;i\in\N_n.
\ee
The fixed point $f^*\in \cB(X,Y)$ is obtained as the limit of the sequence of mappings
\be\label{3.9}
T^k (f_0) \to f^*, \quad\text{as $k\to\infty$},
\ee
where $f_0\in \cB(X,Y)$ is arbitrary.

Next, we would like to consider a special choice for the mappings $F_i$. To this end, we require the concept of an $F$-space. We recall that a metric $d:Y\times Y\to \R$ is called \textit{complete} if every Cauchy sequence in $Y$ converges with respect to $d$ to a point of $Y$, and \textit{translation-invariant} if 
\[
d(x+a,y+a) = d(x,y), \quad\text{for all $x,y,a\in Y$}.
\]

Now assume that $Y$ is an \emph{${F}$-space}, i.e., a topological vector space whose topology is induced by a complete translation-invariant metric $d$, and in addition that this metric is homogeneous. This setting allows us to consider mappings $F_i$ of the form
\be\label{specialv}
F_i (x,y) :=q_i (x) + S_i (x) \,y,\quad i\in\N_n,
\ee
where $q_i \in \cB(X,Y)$ and $S_i : X\to \R$ is a function.

As the metric $d_Y$ is homogeneous, the mappings \eqref{specialv} satisfy condition \eqref{scon} provided that the functions $S_i$ are bounded on $X$ with bounds in $[0,1)$. For then
\begin{align*}
d_Y (q_i (x) + S_i (x) \,y_1,&q_i (x) + S_i (x) \,y_2) = d_Y(S_i (x) \,y_1,S_i (x) \,y_2) \\
& = |S_i(x)| d_Y (y_1, y_2) \leq \|S_i\|_{\infty}\, d_Y (y_1, y_2) \leq s\,d_Y (y_1, y_2).
\end{align*}
Here, $\|\cdot\|_{\infty}$ denotes the supremum norm and $s := \max\{\|S_i\|_{\infty}\st$ $i\in\N_n\}$. Henceforth, we will assume that all functions $S_i$ are bounded above by $s\in [0,1)$.

With the choice \eqref{specialv}, the RB operator $T$ becomes an affine operator on $\cB(X,Y)$ of the form
\begin{align}\label{T}
T g & = \sum_{i=1}^n q_i\circ l_i^{-1} \chi_{l_i(X)} + \sum_{i=1}^n S_i\circ l_i^{-1} \cdot g\circ l_i^{-1} \chi_{l_i(X)}\\
& = T(0) + \sum_{i=1}^n S_i\circ l_i^{-1} \cdot g\circ l_i^{-1} \chi_{l_i(X)}.
\end{align}
Next, we exhibit the relation between the graph $G(f^*)$ of the fixed point $f^*$ of the operator $T$ given by \eqref{RB} and the attractor of an associated contractive IFS. 

To this end, consider the complete metric space $X\times Y$ and define mappings $w_i:X\times Y\to X\times Y$ by
\be\label{wn}
w_i (x, y) := (l_i (x), F_i (x,y)), \quad i\in\N_n.
\ee
Assume that the mappings $F_i$ in addition to being uniformly contractive in the second variable are also uniformly Lipschitz continuous in the first variable, i.e., that there exists a constant $L > 0$ so that for all $y\in Y$,
\[
d_Y(F_i(x_1, y),F_i(x_2, y)) \leq L \, d_X (x_1,x_2), \quad\forall x_1, x_2\in X,\quad\forall i = 1, \ldots, n.
\]
Denote by $a:= \max\{a_i\st i\in\N_n\}$ the largest of the contractivity constants of the $l_i$ and let $\theta := \frac{1-a}{2L}$. Then the mapping $d_\theta : (X\times Y)\times (X\times Y) \to \R$ given by
\[
d_\theta := d_X + \theta\,d_Y
\]
is a metric on $X\times Y$ compatible with the product topology on $X\times Y$.

The next theorem is a special case of a result presented in \cite{bhm}.

\begin{theorem}\label{thm3.1}
The family $\cF_T := (X\times Y, w_1, w_2, \ldots, w_n)$ is a contractive IFS in the metric $d_\theta$ and the graph $G(f^*)$ of the fractal function $f^*$ generated by the RB operator $T$ given by \eqref{RB} is the unique attractor of $\cF_T$. Moreover, 
\be\label{GW}
G(T g) = \cF_T (G(g)),\quad\forall\,g\in B(X,Y),
\ee
where $\cF_T$ denotes the set-valued operator \eqref{1.1}.
\end{theorem}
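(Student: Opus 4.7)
The plan is to verify three things in sequence: that each $w_i$ is a contraction on $(X\times Y,d_\theta)$ so that $\cF_T$ is a contractive IFS, that the set-valued identity \eqref{GW} holds for every $g\in\cB(X,Y)$, and that combining these shows $G(f^*)$ is the unique fixed point of $\cF_T$ as a set-valued operator. For the first, I would estimate
\begin{align*}
d_\theta(w_i(x_1,y_1),w_i(x_2,y_2))
&= d_X(l_i(x_1),l_i(x_2)) + \theta\,d_Y(F_i(x_1,y_1),F_i(x_2,y_2)) \\
&\leq a\,d_X(x_1,x_2) + \theta\bigl[L\,d_X(x_1,x_2) + c\,d_Y(y_1,y_2)\bigr] \\
&= (a+\theta L)\,d_X(x_1,x_2) + c\cdot\theta\,d_Y(y_1,y_2),
\end{align*}
by inserting the triangle inequality inside $d_Y$ around the intermediate point $(x_2,y_1)$ and applying the uniform Lipschitz bound $L$ in the first slot and the uniform contraction constant $c$ in the second. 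The calibration $\theta = (1-a)/(2L)$ forces $a+\theta L = (1+a)/2 < 1$, so each $w_i$ is a contraction on $(X\times Y,d_\theta)$ with constant at most $\sigma := \max\{(1+a)/2,\,c\}<1$, uniformly in $i\in\N_n$.

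For the graph identity I would just unwind the definitions. Parametrizing $G(g)$ by $x\mapsto(x,g(x))$ gives $w_i(G(g)) = \{(l_i(x),F_i(x,g(x))) : x\in X\}$, which, because $l_i$ is injective, is precisely the graph over $l_i(X)$ of the function $\xi\mapsto F_i(l_i^{-1}(\xi),g(l_i^{-1}(\xi)))$. The partition properties \eqref{part1}--\eqref{part2} then allow the union
\[
\cF_T(G(g)) \;=\; \bigcup_{i=1}^{n} w_i(G(g))
\]
to be reassembled as the graph over $\bigcup_{i} l_i(X) = X$ of the piecewise-defined right-hand side of \eqref{RB}, i.e.\ $G(Tg)$. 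Specializing to $g=f^*$ yields $\cF_T(G(f^*)) = G(Tf^*) = G(f^*)$, so $G(f^*)$ is invariant under $\cF_T$; uniqueness in the Banach fixed point theorem applied to $(\cH(X\times Y),h_{d_\theta})$, whose completeness is inherited from $(X\times Y,d_\theta)$, then identifies it with the attractor.

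The main technical issue I foresee is the set-theoretic status of $G(f^*)$: to interpret it as the attractor in $\cH(X\times Y)$ one needs $G(f^*)$ to be nonempty and compact, but a priori $f^*\in\cB(X,Y)$ is only bounded. Under the customary additional assumptions (typically $X$ compact and $F_i$ jointly continuous, so that $f^*$ is continuous and $G(f^*)$ closed and bounded in the product topology), compactness is automatic; otherwise one passes to the closure $\overline{G(f^*)}$, and the invariance identity \eqref{GW} together with continuity of $\cF_T$ in the Hausdorff metric gives the same conclusion. Apart from this bookkeeping, the argument is a direct verification, and the genuinely delicate algebraic point is that the single parameter $\theta$ must simultaneously control the mixing of $x$-contraction with $y$-Lipschitz behavior, which is exactly what the choice $\theta = (1-a)/(2L)$ accomplishes.
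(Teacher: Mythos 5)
Your argument is correct and is essentially the standard one: the paper gives no proof of Theorem \ref{thm3.1}, deferring to \cite{bhm}, and the argument there runs exactly along your lines --- contractivity of each $w_i$ in $d_\theta$ via the calibration $\theta=(1-a)/(2L)$, the graph identity \eqref{GW} obtained from injectivity of the $l_i$ and the partition properties \eqref{part1}--\eqref{part2}, and then uniqueness of the attractor in the hyperspace over $(X\times Y,d_\theta)$. Your closing caveat about compactness of $G(f^*)$ is the right one to flag (it is precisely why this statement is normally read with $X$ compact and $f^*$ continuous, as in the cited source) and does not change the assessment.
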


Equation \eqref{GW} can be represented by the following commutative diagram
\be\label{diagram}
\begin{CD}
X\times Y @>\cF_T>> X\times X\\
@AAGA                  @AAGA\\
\cB(X,Y) @>T>>  \cB(X,Y)
\end{CD}
\ee
\ml
where $G$ is the mapping $\cB(X,Y)\ni g\mapsto G(g) = \{(x, g(x)) : x\in X\}\in X\times Y$.

On the other hand, suppose that $\cF = (X\times Y, w_1, w_2, \ldots, w_n)$ is an IFS whose mappings $w_i$ are of the form \eqref{wn} where the functions $l_i$ are contractive injections satisfying \eqref{part1} and \eqref{part2}, and the mappings $F_i$ are  uniformly Lipschitz continuous in the first variable and uniformly contractive in the second variable. Then we can associate with the IFS $\cF$ an RB operator $T_\cF$ of the form \eqref{RB}. The attractor $A_\cF$ of $\cF$ is then the graph $G(f)$ of the fixed point $f$ of $T_\cF$. (This was the original approach in \cite{B2} to define a fractal interpolation function on a compact interval in $\R$.) The commutativity of the diagram \eqref{diagram} then holds with $\cF_T$ replaced by $\cF$ and $T$ replaced by $T_\cF$.

We now specialize even further and choose arbitrary $f, b\in \cB(X,Y)$ and set
\be\label{q}
q_i := f\circ l_i - S_i\cdot b.
\ee
Then the RB operator $T$ becomes 
\be\label{eq3.17}
T g = f + (S_i\circ l_i^{-1})\cdot (g-b)\circ l_i^{-1}, \quad \textrm{on} \quad l_i(X),\, i\in \N_n. 
\ee 
and, under the assumption that $s < 1$ its unique fixed point $f^*\in \cB(X,Y)$ satisfies the self-referential equation
\be\label{3.15}
f^* = f + (S_i \circ l_i^{-1})\cdot ({f^*}-b)\circ l_i^{-1}, \quad \textrm{on} \quad l_i(X),\, i\in \N_n.
\ee
\begin{remarks}\hfill
\begin{enumerate}
\item[\emph{(a)}]  The functions $f$ and $b$ are referred to as \emph{seed} and \emph{base} function, respectively.
\item[\emph{(b)}] The fixed point $f^*$ in \eqref{3.15} clearly depends on the seed function $f$, the base function $b$, and the scaling functions $S_i$. Fixing $f$ and $b$, but varying the $S_i$, generates an uncountable family of fractal functions $f^* = f^*(S_1, \ldots, S_n)$ originating from $f = f^*(0,\ldots, 0)$.
\end{enumerate}
\end{remarks}

In the case of univariate fractal interpolation on the real line with $X:= [a, b]$, $-\infty < a < b < +\infty$, the base function $b$ can be chosen to be the affine function whose graph connects the points $(a, f(a))$ and $(b, f(b))$. 

If we consider the complete metric space of continuous functions $(\cC(X, \R), d)$ instead of $(\cB(X,\R),d)$, define 
\[
x_0 := a, \quad x_n:= b, \quad\text{and $x_i := l_i (b)$, \;\; $i\in\N_n$}, 
\]
and impose the join-up conditions
\be
Tf (x_j-) = Tf(x_j+), \quad j \in \N_{n-1},
\ee
the fixed point $f^*$ will be a continuous function whose graph interpolates the set $\{(x_j, f(x_j)) : j = 0, 1,\ldots, n\}$. Such functions are usually referred to as \emph{fractal interpolation functions} \cite{B2,H}. As the RB operator is the same at each level of recursion \eqref{3.9}, we refer to this as \emph{stationary fractal interpolation}.

\section{non-stationary Fractal Functions}\label{sect5}

Here, we introduce non-stationary versions of the concepts of fractal functions  as presented in the previous section.

To this end, consider a doubly-indexed family of injective contractions $\{l_{i_k,k} : i_k\in \N_{n_k}, \, k\in \N\}$ from $X\to X$ generating a partition of $X$ for each $k\in \N$ in the sense of \eqref{part1} and \eqref{part2}. 

Suppose that $Y$ is an $F$-space, $\{q_{i_k,k}: i_k\in \N_{n_k}, \, k\in \N\} \subset \cB(X,Y)$, and $\{S_{i_k,k}: i_k\in \N_{n_k}, \, k\in \N\}\subset \cB(X,\R)$ is such that 
\[
s := \sup\limits_{k\in \N} \max\limits_{i_k\in \N_k} \|S_{i_k,k}\|_\infty < 1.
\] 
For each $k\in \N$, define an RB operator $T_k: \cB(X,Y)\to \cB(X,Y)$ by
\begin{align}
T_k f &:= \sum_{i_k=1}^{n_k} q_{i_k,k}\circ l_{i_k,k}^{-1}\, \chi_{l_{i_k,k}(X)} + \sum_{i_k=1}^{n_k} S_{i_k,k}\circ l_{i_k,k}^{-1} \cdot f\circ l_{i_k,k}^{-1} \,\chi_{l_{i_k,k}(X)}\label{nonstat1}\\
& = T_k(0) + \sum_{i_k=1}^{n_k} S_{i_k,k}\circ l_{i_k,k}^{-1} \cdot f\circ l_{i_k,k}^{-1}\, \chi_{l_{i_k,k}(X)}.
\end{align}
It is straight-forward to verify that each RB operator $T_k$ is a contraction on $\cB(X,Y)$ with Lipschitz constant 
\be\label{Lip}
\Lip (T_k) = \max\limits_{i_k\in \N_k} \|S_{i_k,k}\|_\infty \leq s < 1.
\ee
\begin{proposition}\label{prop4.1}
Let $\{T_k\}_{k\in \N}$ be a sequence of RB operators of the form \eqref{nonstat1} on $(\cB(X,Y),d)$. Suppose that the elements of $\{q_{i_k,k}: i_k\in \N_{n_k}, \, k\in \N\}$ satisfy
\be\label{condq}
\sup\limits_{k\in \N} \max\limits_{i_k\in \N_k} d(q_{i_k,k},0) \leq M,
\ee
for some $M > 0$. Then the ball $B_r(0)$ of radius $r=M/(1-s)$ centered at $0\in \cB(X,Y)$ is an invariant set for $\{T_k\}_{k\in \N}$.
\end{proposition}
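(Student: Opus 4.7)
The plan is to verify the hypothesis of Proposition \ref{prop2.1} in the complete metric space $(\cB(X,Y),d)$ with the special point $q=0$, so that the invariant ball conclusion is immediate.

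First I would establish the bound $d(T_k(0),0)\leq M$. By definition, $T_k(0)=\sum_{i_k=1}^{n_k} q_{i_k,k}\circ l_{i_k,k}^{-1}\,\chi_{l_{i_k,k}(X)}$. Because the sets $\{l_{i_k,k}(X)\}_{i_k\in\N_{n_k}}$ form a partition of $X$ (see \eqref{part1} and \eqref{part2}), every $x\in X$ lies in exactly one $l_{i_k,k}(X)$, and on that piece $T_k(0)(x)=q_{i_k,k}(l_{i_k,k}^{-1}(x))$. Hence
\begin{equation*}
d(T_k(0),0) = \sup_{x\in X} d_Y\big(T_k(0)(x),0\big) = \max_{i_k\in\N_{n_k}} \sup_{x\in l_{i_k,k}(X)} d_Y\big(q_{i_k,k}(l_{i_k,k}^{-1}(x)),0\big) \leq \max_{i_k\in\N_{n_k}} d(q_{i_k,k},0) \leq M,
\end{equation*}
where the last inequality uses the assumption \eqref{condq}.

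Next, I would exploit the contractivity \eqref{Lip} of each $T_k$. For any $f\in\cB(X,Y)$, the triangle inequality in $(\cB(X,Y),d)$ together with $\Lip(T_k)\leq s$ yields
\begin{equation*}
d(T_k f, 0) \leq d(T_k f, T_k(0)) + d(T_k(0),0) \leq s\, d(f,0) + M.
\end{equation*}

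This is precisely the hypothesis of Proposition \ref{prop2.1} with $\mu=s\in[0,1)$ and the distinguished point $q=0$. Applying that proposition to the sequence $\{T_k\}_{k\in\N}$ on the complete metric space $(\cB(X,Y),d)$ produces the invariant ball $B_r(0)$ of radius $r=M/(1-s)$, which is the desired conclusion. I do not anticipate a substantial obstacle; the only subtle point is using the partition property of $\{l_{i_k,k}(X)\}$ to convert the supremum defining $d(T_k(0),0)$ into a maximum over the finitely many pieces, thereby controlling it by the uniform bound $M$.
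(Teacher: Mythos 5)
Your proof is correct and takes essentially the same route as the paper: verify the hypothesis of Proposition \ref{prop2.1} at the point $q=0$ with $\mu=s$, using \eqref{condq} (via the partition property) to bound $d(T_k(0),0)$ by $M$. The only cosmetic difference is that you obtain $d(T_k f,0)\leq s\,d(f,0)+M$ from the triangle inequality in $(\cB(X,Y),d)$ together with the already-stated Lipschitz bound \eqref{Lip}, whereas the paper carries out the same estimate pointwise using translation invariance and homogeneity of $d_Y$.
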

\begin{proof}
Note that since $Y$ is an $F$-space, we have for all $a,b\in Y$, 
\[
d_Y(a+b,0) \leq d_Y(a+b,b) + d_Y(b,0) = d_Y(a,0) + d_Y(b,0).
\]
Now let $x\in X$. Then there exists an $i_k\in \N_{n_k}$ with $x\in l_{i_k,k}(X)$. Thus, for any $f\in \cB(X,Y)$, 
\begin{align*}
d_Y(T_k f (x), 0) &\leq d_Y (S_{i_k,k}\circ l_{i_k,k}^{-1}(x) \cdot f\circ l_{i_k,k}^{-1}(x),0) + d_Y(T_k(0),0) 
\end{align*}
By \eqref{condq},  $T_k(0)$ is uniformly bounded in $\cB(X,Y)$ by $M > 0$. As the metric $d_Y$ is homogeneous, 
\[
d_Y (S_{i_k,k}\circ l_{i_k,k}^{-1}(x) \cdot f\circ l_{i_k,k}^{-1}(x),0) \leq s \,d_Y(f\circ l_{i_k,k}^{-1}(x),0),
\]
which shows, after taking the sup over $x\in X$, that $d(T_k f, 0) \leq s\,d(f,0) + M$. Proposition \eqref{prop2.1} now yields the statement. 
\end{proof}
Considering the backward trajectories $\{\Psi_k\}_{k\in \N}$ of the sequence $\{T_k\}_{k\in \N}$ of RB operators defined above and using Theorem \eqref{th2.1}, we obtain the next result.

\begin{theorem}\label{thm4.1}
The backwards trajectories $\{\Psi_k\}_{k\in \N}$ converge for any initial $f_0\in \sI$ to a unique attractor $f^* \in \sI$, where $\sI$ is the closed ball in $\cB(X,Y)$ of radius $M/(1-s)$ centered at $0$.
\end{theorem}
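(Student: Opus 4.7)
The plan is to apply Theorem \ref{th2.1} to the sequence of RB operators $\{T_k\}_{k\in \N}$, now viewed as contractions on the complete metric space $(\cB(X,Y), d)$. The result cited from \cite{LDV} is formulated for set-valued maps on $\cH(X)$, but its proof is a purely metric contraction-plus-Cauchy argument: nothing in it uses the set-valued structure beyond completeness of the ambient space, the existence of a common closed invariant set, and the summability \eqref{lipcond} of the Lipschitz-product tail. So I would first observe that exactly the same conclusion holds in the function-space setting, and then verify these three ingredients.

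Completeness of $(\cB(X,Y), d)$ was already recorded in Section 4. The invariant closed set is handed to us by Proposition \ref{prop4.1}: the closed ball $\sI = B_r(0)$ with $r = M/(1-s)$ is mapped into itself by every $T_k$. For the summability, estimate \eqref{Lip} gives $\Lip(T_k) \leq s < 1$ uniformly in $k$, whence
\[
\sum_{k=1}^\infty \prod_{j=1}^k \Lip(T_j) \;\leq\; \sum_{k=1}^\infty s^k \;=\; \frac{s}{1-s} \;<\; \infty,
\]
which verifies \eqref{lipcond} with room to spare.

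The conclusion then falls out of the standard backward-trajectory Cauchy estimate: writing $\Psi_{k+m}(f_0) = (T_1\circ\cdots\circ T_k)(T_{k+1}\circ\cdots\circ T_{k+m}(f_0))$ and $\Psi_k(f_0) = (T_1\circ\cdots\circ T_k)(f_0)$, one obtains $d(\Psi_{k+m}(f_0), \Psi_k(f_0)) \leq s^k \cdot 2r$ from $\Lip(T_1\circ\cdots\circ T_k)\leq s^k$ together with the invariance of $\sI$ (which bounds the inner distance by the $d$-diameter of $\sI$). Hence $\{\Psi_k(f_0)\}$ is Cauchy, converges by completeness, and its limit $f^*$ lies in $\sI$ because $\sI$ is closed. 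Independence of $f^*$ from the choice of $f_0\in\sI$ follows from the analogous bound $d(\Psi_k(f_0),\Psi_k(g_0)) \leq s^k\, d(f_0, g_0) \to 0$.

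The only point requiring actual (as opposed to purely formal) attention is the transfer of Theorem \ref{th2.1} from its $\cH(X)$-formulation to the present operator-on-function-space setting. Since the argument in \cite{LDV} is genuinely metric-theoretic and does not exploit anything specific to the Hausdorff structure on $\cH(X)$, this transfer is routine; the Cauchy estimate above makes it explicit, so no further hypothesis beyond Proposition \ref{prop4.1} and \eqref{Lip} is needed.
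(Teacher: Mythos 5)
Your proposal is correct and follows essentially the same route as the paper: the paper's proof likewise invokes Theorem \ref{th2.1} together with the invariant ball from Proposition \ref{prop4.1}, and verifies \eqref{lipcond} via $\prod_{j=1}^k \Lip(T_j) \leq s^k$ and the geometric series. Your explicit Cauchy estimate justifying the transfer of the backward-trajectory result from the $\cH(X)$ setting to $(\cB(X,Y),d)$ is a point the paper leaves implicit, but it is the same underlying argument, not a different one.
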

\begin{proof}
By Theorem 2.1 it remains to show that $\sum\limits_{k=1}^\infty\prod\limits_{j=1}^k \Lip (T_j)$ converges. This, however, follows directly from \eqref{Lip}:
\[
\prod_{j=1}^k \Lip (T_j) \leq s^k\quad\text{and}\quad\sum_{k=1}^\infty s^k = \frac{s}{1-s}.\qedhere
\]
\end{proof}

A fixed point $f^*$ generated by a sequence $\{T_k\}$ of different RB operators will be called a \emph{non-stationary fractal function (of class $\cB(X,Y)$).}

\begin{remark}
Item (b) in Remarks \ref{rem2.1}, of course, also applies to a sequence of RB operators $\{T_k\}$ thus allowing the construction of more general fractal functions exhibiting different local behavior at different scales. 
\end{remark}

\begin{example}
Let $X:= [0,1]$ and $Y:= \R$. Consider the two RB operators
\[
T_1 f (x) := \begin{cases} 2x + \frac12 f(2x), & x\in [0,\frac12),\\
2 - 2x + \frac12 f(2x-1), & x\in [\frac12,1],
\end{cases}
\]
and 
\[
T_2 f (x) := \begin{cases} 2x + \frac14 f(2x), & x\in [0,\frac12),\\
2 - 2x + \frac14 f(2x-1), & x\in [\frac12,1].
\end{cases}
\]
For both operators, $l_i (x) := \frac12 (x + i-1)$, $i=1,2$.

It is known that $T_1^k f \to \tau$, where $\tau$ denotes the Takagi function \cite{Tak} and that $T_2^k\to q$, where $q(x)= 4x(1-x)$.

Consider the alternating sequence $\{T_i\}_{i\in \N}$ of RB operators given by
\[
T_i := \begin{cases} T_1, & 10(j-1) < i \leq 10j-5,\\
T_2, & 10j-5 < i \leq 10j,
\end{cases}\quad j\in \N.
\]
Two images of this hybrid attractor of the backward trajectory $\Psi_k$ starting with $f_0 \equiv 0$ are shown in Figure \ref{fig1}.
\begin{figure}[h!]
\begin{center}
\includegraphics[width=5cm, height=3cm]{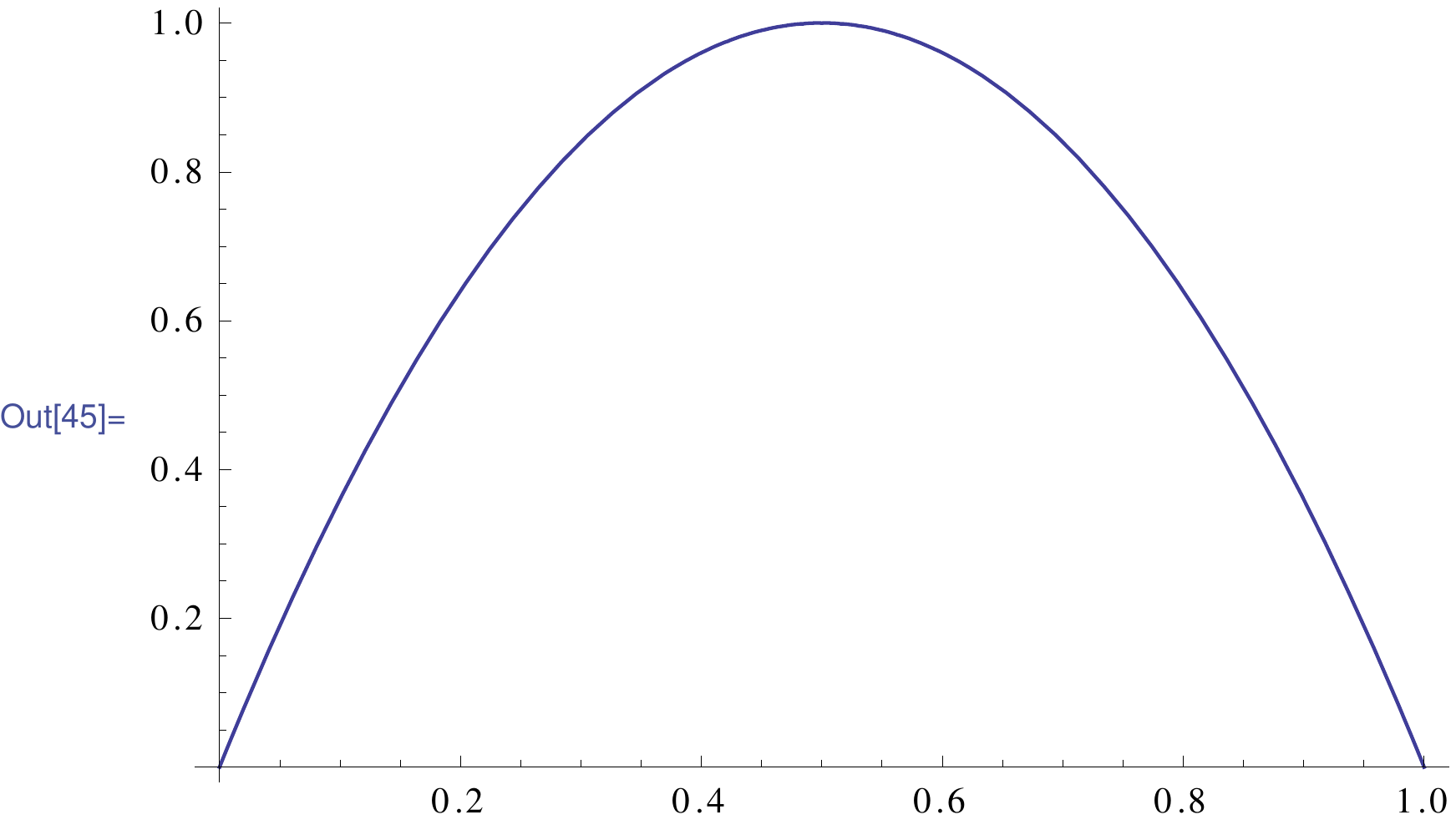}\hspace*{1cm}\includegraphics[width=5cm, height=3cm]{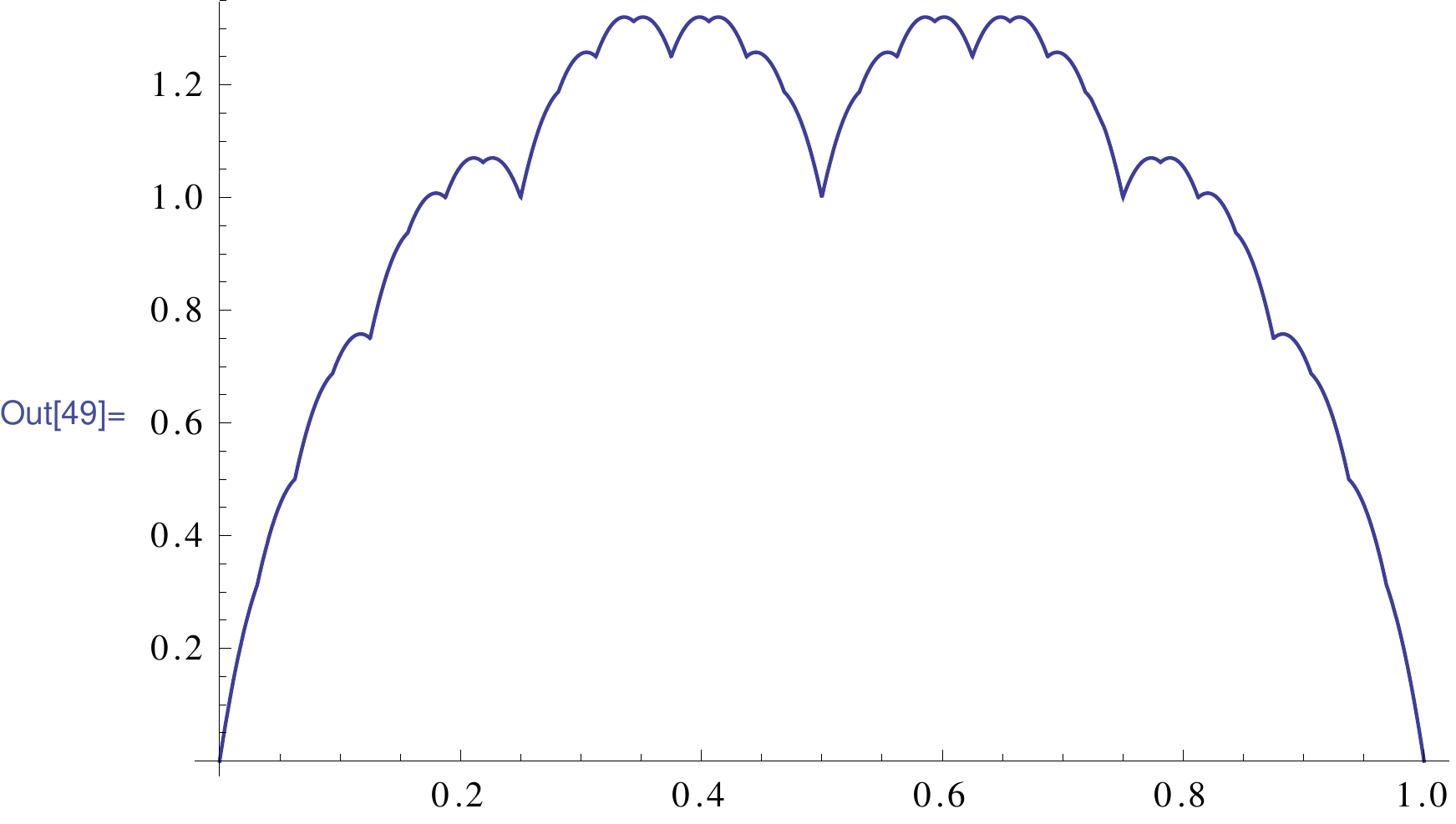}
\end{center}
\caption{The hybrid $\tau-q$ attractor. It is smooth at one scale but fractal at another.}\label{fig1}
\end{figure}
\end{example}

\section{non-stationary Fractal Interpolation}

Let us now consider the case $X := [0,1]$ and $Y := \R$. Both spaces are metrizable under the usual Euclidean distance. In the following, we consider a sequence $\{T_k\}$ of RB operators of the form \eqref{eq3.17} acting on an appropriate metric subspace of $\cB[0,1] := \cB([0,1],\R)$. Our emphasis here lies in the construction of attractors that are continuous functions on $[0,1]$. For this purpose, we need to impose conditions on the RB operators that guarantee global continuity of the iterates across $[0,1]$.

For $k\in \N$, let $\{l_{i_k,k} : i_k\in \N_{n_k}, \, k\in \N\}$ be family of injections from $[0,1]\to [0,1]$ generating a partition of $[0,1]$ in the sense of \eqref{part1} and \eqref{part2}. Assume w.l.o.g. that $l_{1,k}(0) = 0$ and $l_{n_k,k}(1) = 1$ and define
\begin{align*}
x_{i_k-1,k} := l_{i_k,k}(0), \quad x_{i_k,k} := l_{i_k,k}(1), \quad i_k\in \N_{n_k}
\end{align*}
where $x_{0,k} :=0$ and $x_{n_k,k} := 1$. By relabelling -- if necessary -- we may assume that $0 = x_{0,k}  < \cdots < x_{i_k-1,k} < x_{i_k,k} < \cdots x_{n_k,k} = 1$.

Let $f\in \mcC[0,1]$ be arbitrary. Define a metric subspace of $\mcC[0,1]$ by 
\[
\mcC_*[0,1] := \{g\in \mcC[0,1] : g(0) = f(0) \wedge g(1) = f(1)\}
\] 
and note that $\mcC_*[0,1]$ becomes a complete linear metric space when endowed with the metric induced by the sup-norm on continuous functions. Additionally, let $b\in \mcC_*[0,1]$ be the unique affine function whose graph connects the points $(0, f(0))$ and $(1, f(1))$:
\be\label{b}
b(x) = (f(1) - f(0)) x + f(0).
\ee
Further, let $\{\cP_k\}_{k\in \N}$ where $\cP_k :=\{(x_{j_k}, f(x_{j,k})\in [0,1]\times \R : j = 0,1,\ldots, n\}$, be a family of sets of points in $[0,1]\times\R$. For $k\in \N$, define an RB operator $T_k: \mcC_*[0,1]\to \mcC_*[0,1]$ by
\be\label{eq5.1}
T_k g = f + \sum_{i_k=1}^{n_k} S_{i_k,k}\circ l_{i_k,k}^{-1} \cdot (g-b)\circ l_{i_k,k}^{-1}\, \chi_{l_{i_k,k} [0,1]},
\ee
where $\{S_{i_k,k}\}_{i_k=1}^{n_k}\subset \mcC[0,1]$ such that 
\[
\sup_{k\in \N} \max_{i_k\in \N_{i_k}} \|S_{i_k,k}\|_\infty < 1.
\]
Note that we have continuity of $T_k g$ at the points $x_{i_k,k}\in [0,1]$:
\[
T_k g (x_{i_k,k}-) = T_k g (x_{i_k,k}+), \quad\forall\,i_k\in\{1, \ldots, n-1\}.
\]
For, 
\begin{align*}
T_k g (x_{i_k,k}-) &= f(x_{i_k,k}-) + S_{i_k,k}\circ l_{i_k,k}^{-1}(x_{i_k,k}-) \cdot (g-b)\circ l_{i_k,k}^{-1}(x_{i_k,k}-) \\
&= f(x_{i_k,k}) + S_{i_k,k} (1)\cdot (f - b)(1) = f(x_{i_k,k})
\end{align*}
and
\begin{align*}
T_k g (x_{i_k,k}+) &= f(x_{i_k,k}+) + S_{i_k+1,k}\circ l_{i_k+1,k}^{-1}(x_{i_k,k}+) \cdot (g-b)\circ l_{i_k+1,k}^{-1}(x_{i_k,k}+) \\
&= f(x_{i_k,k}) + S_{i_k+1,k} (0)\cdot (f - b)(0) = f(x_{i_k,k}).
\end{align*}
Therefore,  $T_k g\in \mcC_*[0,1]$ and $T_k g$ interpolates $\cP_k$ in the sense that
\[
T_k g (x_{i_k,k}) = f(x_{i_k,k}), \quad \forall\,i_k\in \N_{n_k}.
\]

\begin{remark}
Denote by $([0,1], \cL_k)$ the IFS given by the maps $\cL_k := \{l_{i_k,k} : i_k\in \N_{n_k}\}$ and observe that, for each $k\in\N$, the attractor of $([0,1], \cL_k)$ is the interval $[0,1]$. The invariant set, in $\cH([0,1])$, for $\cL_k$ is given by $[0,1]$. Hence, all backward trajectories $\cL_1\circ \cdots \circ \cL_k$ converge to $[0,1]$ as $k\to\infty$ (as do all forward trajectories).
\end{remark}

\begin{proposition}
A nonempty closed invariant set for $\{T_k\}_{k\in \N}$ is given by the closed ball in $\mcC_*[0,1]$,
\be\label{I}
\sI = \left\{g\in \mcC_*[0,1] : \|g\|_\infty \leq \frac{\|f\|_\infty + s \|b\|_\infty}{1-s}\right\},
\ee
where $s$ is given by \eqref{Lip}.
\end{proposition}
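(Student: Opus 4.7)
The plan has three parts: nonemptiness, closedness, and invariance. For nonemptiness, I would simply observe that $f$ itself lies in $\sI$: trivially $f\in\mcC_*[0,1]$, and the inequality $\|f\|_\infty \leq (\|f\|_\infty + s\|b\|_\infty)/(1-s)$ rearranges to $-s\|f\|_\infty \leq s\|b\|_\infty$, which always holds. Closedness is essentially formal: $\sI$ is the intersection of the sup-norm closed ball of radius $r := (\|f\|_\infty + s\|b\|_\infty)/(1-s)$ in $\mcC[0,1]$ with the closed affine subspace $\mcC_*[0,1] = \{g\in\mcC[0,1] : g(0)=f(0),\ g(1)=f(1)\}$, so it is closed in the complete metric space $\mcC_*[0,1]$.

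The substantive part is invariance. Fix $k\in\N$ and $g\in\sI$. First I would confirm that $T_k g$ still belongs to $\mcC_*[0,1]$. Continuity of $T_k g$ at each internal node $x_{i_k,k}$ was already established in the text immediately preceding the proposition via the join-up computation, using that $(g-b)(0)=(g-b)(1)=0$. The same identity gives the endpoint conditions: at $x=0$, only the summand with $i_k=1$ contributes and
\[
T_k g(0) = f(0) + S_{1,k}(0)\cdot (g-b)(0) = f(0),
\]
and analogously $T_k g(1) = f(1)$, using $g(0)=b(0)=f(0)$ and $g(1)=b(1)=f(1)$.

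For the norm bound, fix $x\in[0,1]$ and let $i_k\in\N_{n_k}$ be such that $x\in l_{i_k,k}[0,1]$. Using the triangle inequality in \eqref{eq5.1} together with $\|S_{i_k,k}\|_\infty\leq s$,
\[
|T_k g(x)| \leq |f(x)| + s\bigl(|g(l_{i_k,k}^{-1}(x))| + |b(l_{i_k,k}^{-1}(x))|\bigr) \leq \|f\|_\infty + s\|g\|_\infty + s\|b\|_\infty.
\]
Taking the supremum in $x$ and using $\|g\|_\infty\leq r$ yields $\|T_k g\|_\infty \leq \|f\|_\infty + sr + s\|b\|_\infty$. A direct algebraic simplification shows that this right-hand side equals exactly $r$, so $T_k g\in\sI$.

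The argument is essentially a triangle-inequality computation; the only point requiring care is the verification that $T_k g$ lands in $\mcC_*[0,1]$, and that uses precisely the requirement that $g$ already satisfies the boundary conditions $g(0)=f(0)$ and $g(1)=f(1)$, which makes the base function $b$ play its designated role of forcing the boundary values to propagate under $T_k$.
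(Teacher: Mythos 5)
Your argument is correct and is essentially the paper's proof with Proposition \ref{prop4.1} inlined: your bound $\|T_k g\|_\infty \leq \|f\|_\infty + s\|b\|_\infty + s\|g\|_\infty$ is exactly the estimate the paper gets by writing $q_{i_k,k} = f\circ l_{i_k,k} - S_{i_k,k}\cdot b$ as in \eqref{q}, noting $\|q_{i_k,k}\|_\infty \leq \|f\|_\infty + s\|b\|_\infty$, and invoking Propositions \ref{prop4.1} and \ref{prop2.1} to conclude invariance of the ball of radius $M/(1-s)$. Your extra verifications (nonemptiness, closedness, and that $T_k g$ remains in $\mcC_*[0,1]$ via the endpoint and join-up conditions) are sound and correspond to what the paper establishes in the text immediately preceding the proposition.
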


\begin{proof}
Using the form \eqref{q} for the functions $q_{i_k,k}$, we obtain from \eqref{condq} the estimate $\|q_{i_k,k}\|_\infty \leq \|f\|_\infty + s \|b\|_\infty$, which by Proposition \ref{prop4.1} yields the result.
\end{proof}

In connection with Theorem \ref{thm4.1}, the above arguments prove the next result.

\begin{theorem}
Let $\{T_k\}_{k\in \N}$ be a sequence of RB operators of the form \eqref{eq5.1} each of whose elements acts on the complete metric space $(\cC_*[0,1], d)$ where $f\in C_*[0,1]$ is arbitrary and $b$ is given by \eqref{b}. Further, let the family of functions $\{S_{i_k,k}\} \subset \cC[0,1]$ satisfy \eqref{Lip}. Then the backward trajectories $\Psi_k (f_0)$ converge to a function $f^* \in \sI$, for any $f_0\in \sI$. As $f_0$ one may choose $f$ or $b$.
\end{theorem}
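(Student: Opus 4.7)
The plan is to present the statement as a direct corollary of Theorem \ref{thm4.1} (itself an application of Theorem \ref{th2.1}), after confirming that the concrete RB operators \eqref{eq5.1} fit the abstract framework of Section \ref{sect5}. Essentially all of the analytical content has already been packaged into the preceding proposition and into Theorem \ref{thm4.1}; what remains is bookkeeping plus a small endpoint check.

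First, I would verify that $(\mcC_*[0,1],d)$ is a complete metric space on which each $T_k$ acts. Completeness is automatic, since $\mcC_*[0,1]$ is a closed affine subset of the Banach space $\mcC[0,1]$ under the sup-norm. That each $T_k$ is a self-map of $\mcC_*[0,1]$ is the content of the continuity calculation carried out immediately before the statement: the identities $T_k g(x_{i_k,k}-)=T_k g(x_{i_k,k}+)=f(x_{i_k,k})$ at every interior breakpoint place $T_k g$ in $\mcC[0,1]$, while the same calculation at the endpoints (only one of the one-sided formulas being active) gives $T_k g(0)=f(0)$ and $T_k g(1)=f(1)$, the pinning conditions defining $\mcC_*[0,1]$.

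Next, I would match \eqref{eq5.1} to the general form \eqref{nonstat1} by identifying $q_{i_k,k}:=f\circ l_{i_k,k}-S_{i_k,k}\cdot b$ as in \eqref{q}. The triangle inequality together with $\|S_{i_k,k}\|_\infty\leq s<1$ yields the uniform bound
\[
\|q_{i_k,k}\|_\infty\leq \|f\|_\infty + s\,\|b\|_\infty =: M,
\]
which is exactly the hypothesis \eqref{condq} of Proposition \ref{prop4.1}; the latter then produces the closed invariant ball $\sI$ of \eqref{I}, supplying hypothesis (i) of Theorem \ref{th2.1}. Hypothesis (ii) is the same one verified in the proof of Theorem \ref{thm4.1}: $\Lip(T_k)\leq s$ gives $\prod_{j=1}^k \Lip(T_j)\leq s^k$, and $\sum_{k\geq 1} s^k = s/(1-s)<\infty$. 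Applying Theorem \ref{th2.1} (equivalently, Theorem \ref{thm4.1}) yields convergence $\Psi_k(f_0)\to f^*\in\sI$ for every $f_0\in\sI$.

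Finally, to justify choosing $f_0=f$ or $f_0=b$, I would only note that both functions lie in $\sI$. Since $b$ is affine with $b(0)=f(0)$ and $b(1)=f(1)$, its supremum on $[0,1]$ is attained at an endpoint, so $\|b\|_\infty\leq \|f\|_\infty$; then $\|f\|_\infty\leq M/(1-s)$ and $\|b\|_\infty\leq M/(1-s)$ are both immediate from the definition of $M$. I do not foresee any genuine obstacle in this outline; the only place requiring explicit care is keeping the iterates inside the pinned subspace $\mcC_*[0,1]$, which was already handled in the continuity calculation preceding the theorem.
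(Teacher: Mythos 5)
Your proposal is correct and follows essentially the same route as the paper: the paper also obtains the theorem by combining the continuity/endpoint check for $T_k$ on $\mcC_*[0,1]$, the identification $q_{i_k,k}=f\circ l_{i_k,k}-S_{i_k,k}\cdot b$ giving the bound $\|q_{i_k,k}\|_\infty\leq\|f\|_\infty+s\|b\|_\infty$ and hence the invariant ball $\sI$ of \eqref{I} via Proposition \ref{prop4.1}, and then Theorem \ref{thm4.1}. Your added verification that $f,b\in\sI$ is a small but welcome detail the paper leaves implicit.
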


We refer to the fixed point $f^*\in C_*[0,1]$ as a \emph{continuous non-stationary fractal interpolation function}.

To illustrate the above results, we refer to Remark \ref{rem2.1}(c) and present the following example.

\begin{example}
Here, we consider the two RB operators $T_i: C[0,1]\to C[0,1]$, $i=1,2$, given by
\[
(T_1 f)(x) = \begin{cases} -\frac{1}{2}\,f(4x), & x\in[0,\frac{1}{4}),\\
 -\frac{1}{2} + \frac{1}{2}\,f(4x-1), & x\in[\frac{1}{4},\frac{1}{2}),\\
 \frac{1}{2}\,f(4x-2), & x\in[\frac{1}{2},\frac{3}{4}),\\
 \frac{1}{2} + \frac{1}{2}\,f(4x-3), & x\in[\frac{3}{4},1],\end{cases}
\]
and 
\[
(T_2 f)(x) := \begin{cases} \frac34 f(2x), & x \in [0,\frac12),\\
\frac34 + \frac14 f(2x-1), &  x \in [\frac12,1].
\end{cases}
\]
The RB operators $T_1$ and $T_2$ generate \emph{Kiesswetter's fractal function} \cite{Kiess} and a \emph{Casino function} \cite{DS}, respectively. 

Consider again the alternating sequence $\{T_i\}_{i\in \N}$ of RB operators given by
\[
T_i := \begin{cases} T_1, & 10(j-1) < i \leq 10j-5,\\
T_2, & 10j-5 < i \leq 10j,
\end{cases}\quad j\in \N.
\]
Two images of the hybrid attractor of the backward trajectory $\Psi_k$ starting with the function $f_0 (x) = x$, $x\in [0,1]$, are shown below in Figure \ref{fig2}.
\begin{figure}[h!]
\begin{center}
\includegraphics[width=5cm, height=4cm]{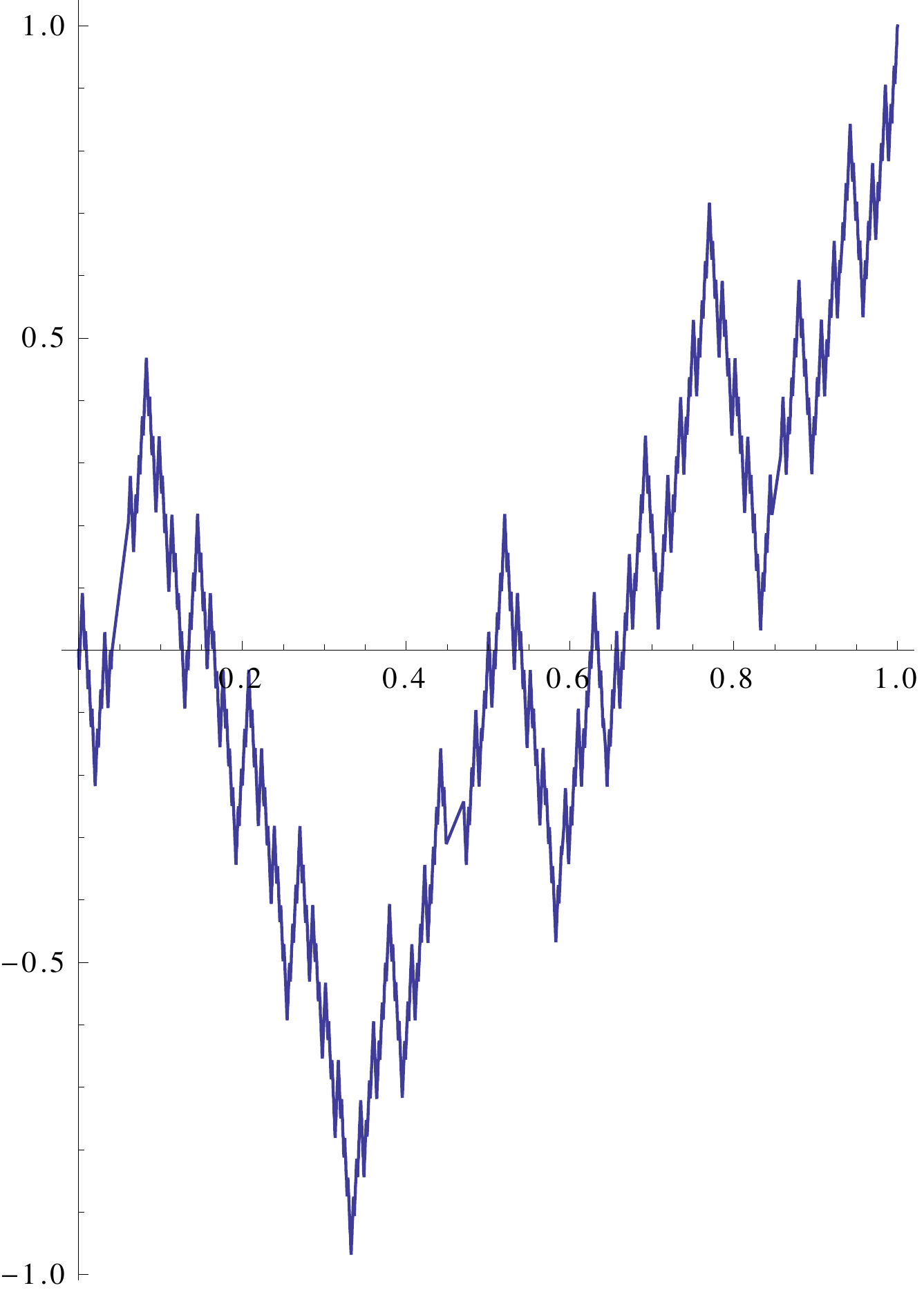}\hspace*{1cm}\includegraphics[width=5cm, height=4cm]{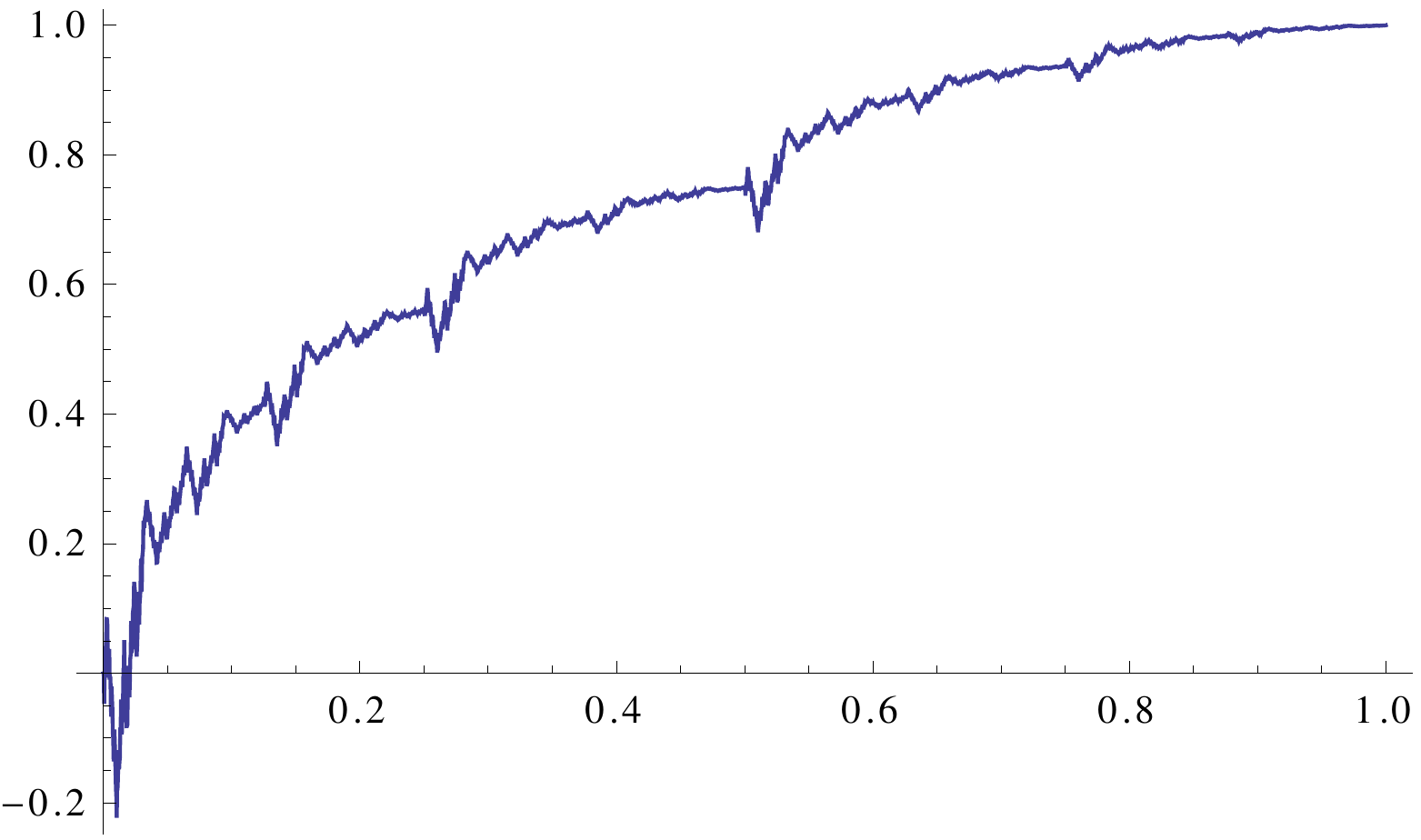}
\end{center}
\caption{The hybrid Kiesswetter-Casino attractor.}\label{fig2}
\end{figure}

\end{example}

\begin{remark}
Theorem \ref{thm3.1} holds in the case of non-stationary fractal functions as well. For $k\in\N$, a \emph{non-stationary} IFS is associated with $T_k$ by setting
\[
w_{i_k,k}(x,y) := (l_{i_k,k}(x), f\circ l_{i_k,k} (x) + S_{i_k,k} (x) \cdot (y - b)).
\]
The conditions imposed on $S_{i_k,k}$ and the form of the second component allows the immediate transfer of the proof of Theorem \ref{thm3.1}. Hence, even in the non-stationary case, one may choose the geometry (IFS) or the analytic (RB operator) approach when defining non-stationary fractal functions.
\end{remark}

\section{non-stationary Fractal Functions in Bochner-Lebesgue Spaces}

In this section, we construct non-stationary fractal functions in the Bochner-Lebesgue spaces $\cL^p$ with $0 < p \leq \infty$. To this end, assume that $X$ is a closed subspace of a Banach space $\sfX$ and that $\X :=(X,\Sigma, \mu)$ is a measure space. Further suppose that $(\sfY, \|\cdot\|_\sfY)$ is a Banach space. 

Recall that the Bochner-Lebesgue space $\cL^p (\X,\sfY)$, $1\leq p\leq \infty$, consists of all Bochner measurable functions $f:X\to \sfY$ such that
\[
\|f\|_{\cL^p(\X,\sfY)} := \left(\int_{X} \|f(x)\|_\sfY^p \,d\mu(x)\right)^{1/p} < \infty, \quad 1 \leq p < \infty,
\]
and
\[
\|f\|_{\cL^\infty(\X,\sfY)} := \esssup\limits_{x\in X} \|f(x)\|_\sfY < \infty, \quad p = \infty.
\]
For $0 < p <1$, the spaces $\cL^p(\X,\sfY)$ are defined using a metric instead of a norm to obtain completeness. More precisely, for $0<p<1$, define $d_p : \cL^p(\X, \sfY)\times \cL^p(\X,\sfY)\to \R$ by
\[
d_p (f,g) := \|f - g\|_{\sfY}^p.
\]
Then $(\cL^p(\X,\sfY), d_p)$ becomes an $F$-space. (Note that the inequality $(a+b)^p \leq a^p + b^p$ holds for all $a,b\geq 0$.) For more details, we refer to \cite{A,rudin}.

In order to work in both cases simultaneously, we define $\rho_p: \cL^p(\X,\sfY)\times \cL^p(\X,\sfY)\to \R$ by
\[
\rho_p(g,h) := \begin{cases} \|g - h\|_{\cL^p(\X,\sfY)}, & 1\leq p \leq \infty,\\
\|g - h\|_Y^p, & 0< p < 1,
\end{cases}
\]
with the usual modification for $p = \infty$.

We use the notation and terminology of Section \ref{sect5} and assume that 
\begin{enumerate}
\item[(A1)] $\{q_{i_k,k}: i_k\in \N_{n_k}, \, k\in \N\} \subset \cL^p(\X,\sfY)$;\ml
\item[(A2)] $\{S_{i_k,k}: i_k\in \N_{n_k}, \, k\in \N\} \subset \cL^p(\X,\R)$;\ml
\item[(A3)] $\{l_{i_k,k} : i_k\in \N_{n_k}, \, k\in \N\}$ is a family of $\mu$-measurable diffeomorphisms $X\to X$ generating for each $k\in \N$ a partition of $X$ in the sense of \eqref{part1} and \eqref{part2}.
\end{enumerate}

If we define for each $k\in\N$ an RB operator $T_k$ on $\cL^p(\X,\sfY)$ of the form \eqref{nonstat1}, whose maps satisfy assumptions (A1), (A2), and (A3), then a straight-forward computation shows that $T_k$ has the following Lipschitz constants on $\cL^p(\X,\sfY)$:
\begin{align*}
\rho_p (T_k g, T_k h) \leq \left\{\begin{matrix} \left(\sum\limits_{i_k=1}^{n_k} \|S_{i_k,k}\|^p_{\cL^p(\X,\sfY)}\cdot L_{i_k,k}\right)^{1/p},\qquad (1\leq p < \infty)\\ 
\max\limits_{i_k\in \N_{n_k}} \|S_{i_k,k}\|_{\cL^\infty(\X,\sfY)},\qquad (p = \infty)\\ \sum\limits_{i_k=1}^{n_k} \|S_{i_k,k}\|^p_{\cL^p(\X,\sfY)}\cdot L_{i_k,k},\qquad (0<p<1)\\ \end{matrix}\right\} \rho_p(g,h),
\end{align*}
where $L_{i_k,k}$ denotes the Lipschitz constant of $D l_{i_k,k}^{-1}$ and $D$ the Fr\'echet derivative on $X$.

Now set
\be\label{eq7.1}
\gamma_p := \begin{cases} \sup\limits_{k\in \N}\left(\sum\limits_{i_k=1}^{n_k} \|S_{i_k,k}\|^p_{\cL^p(\X,\sfY)}\cdot L_{i_k,k}\right)^{1/p}, & 1\leq p < \infty\\ \sup\limits_{k\in \N}\max\limits_{i_k\in \N_{n_k}} \|S_{i_k,k}\|_{\cL^\infty(\X,\sfY)}, & p = \infty\\ \sup\limits_{k\in \N}\left(\sum\limits_{i_k=1}^{n_k} \|S_{i_k,k}\|^p_{\cL^p(\X,\sfY)}\cdot L_{i_k,k}\right), & 0 < p < 1.\\ \end{cases}
\ee

Imposing the condition
\be\label{eq7.2}
\sup\limits_{k\in \N}\max\limits_{i_k\in \N_{n_k}} \rho_p(q_{i_k,k},0) < M,
\ee
for some $M>0$ and further requiring that
\be\label{eq7.3}
\Lip T_k \leq \gamma_p < 1, \quad\forall\,k\in \N,
\ee
yields by Proposition \ref{prop4.1} an invariant set for $\{T_k\}_{k\in \N}$, namely the closed $\cL^p$-ball
\[
\sI = B_r (0) \quad\text{with $\,r = M/(1-\gamma_p)$}.
\]

The above elaborations now prove the following theorem.

\begin{theorem}\label{thm7.1}
Let $\{T_k\}_{k\in \N}$ be a sequence of RB operators of the from \eqref{nonstat1} mapping $\cL^p (\X,\sfY)$ into itself. Further suppose that the Lipschitz constant of $T_k$ satisfies \eqref{eq7.3} and that the maps $\{q_{i_k,k}\}$ fulfill \eqref{eq7.2}. Then the backward trajectories $\{\Psi_k\}_{k\in \N}$ of $\{T_k\}_{k\in \N}$ converge for any initial $f_0\in \sI$ to a unique attractor $f^* \in \sI$, where $\sI$ is the ball in $\cL(\X,\sfY)$ of radius $M/(1-\gamma_p)$ centered at $0$.
\end{theorem}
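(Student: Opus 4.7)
The plan is to reduce Theorem \ref{thm7.1} to Theorem \ref{th2.1} by verifying its two hypotheses for the sequence $\{T_k\}_{k\in\N}$ on the complete metric space $(\cL^p(\X,\sfY),\rho_p)$. The structure mirrors that of the proof of Theorem \ref{thm4.1}, with the scalar Lipschitz constant of the sup-norm setting replaced by the quantity $\gamma_p$ defined in \eqref{eq7.1}.

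First, I would establish the contraction estimate $\rho_p(T_k g, T_k h)\le \gamma_p\,\rho_p(g,h)$, which is the computation already carried out in the paragraph preceding the theorem. The key ingredient is a decomposition of the integration domain (or essential supremum) along the partition $\{l_{i_k,k}(X)\}_{i_k\in\N_{n_k}}$ guaranteed by (A3), followed by a change of variables $y=l_{i_k,k}^{-1}(x)$ whose Jacobian is controlled by the Lipschitz constant $L_{i_k,k}$ of $Dl_{i_k,k}^{-1}$. For $1\le p<\infty$ this produces the claimed bound directly; for $p=\infty$ it is immediate from taking an essential supremum; for $0<p<1$ the sub-additivity $(a+b)^p\le a^p+b^p$ replaces the triangle inequality and yields the corresponding summed form. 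Hypothesis \eqref{eq7.3} then gives $\Lip(T_k)\le \gamma_p<1$ uniformly in $k$.

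Second, I would produce the invariant set by combining this contraction estimate with \eqref{eq7.2} via Proposition \ref{prop2.1}. Indeed, writing
\[
\rho_p(T_k f,0)\;\le\;\rho_p(T_k f,T_k 0)+\rho_p(T_k 0,0)\;\le\;\gamma_p\,\rho_p(f,0)+M,
\]
and applying Proposition \ref{prop2.1} with $q=0$, $\mu=\gamma_p$, shows that the closed ball $\sI=B_r(0)$ with $r=M/(1-\gamma_p)$ is invariant for $\{T_k\}_{k\in\N}$. Since $(\cL^p(\X,\sfY),\rho_p)$ is complete, $\sI$ is nonempty and closed, which by Remark \ref{rem2.1}(a) is sufficient to invoke Theorem \ref{th2.1}.

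Third, hypothesis (ii) of Theorem \ref{th2.1} follows from the uniform bound $\Lip(T_k)\le\gamma_p$ via the geometric series
\[
\sum_{k=1}^\infty\prod_{j=1}^k\Lip(T_j)\;\le\;\sum_{k=1}^\infty\gamma_p^k\;=\;\frac{\gamma_p}{1-\gamma_p}\;<\;\infty.
\]
Applying Theorem \ref{th2.1} to the restriction of $\{T_k\}$ to $\sI$ then yields convergence of the backward trajectories $\Psi_k(f_0)$ to a unique attractor $f^*\in\sI$ for any $f_0\in\sI$. The only genuinely nontrivial step is the $\cL^p$ contraction estimate in the first paragraph, which is mild once assumption (A3) (in particular, $\mu$-measurability of the $l_{i_k,k}$ and control of $Dl_{i_k,k}^{-1}$) is in force; all remaining steps are bookkeeping and a direct appeal to Theorem \ref{th2.1}.
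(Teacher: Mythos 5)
Your proposal is correct and follows essentially the same route as the paper: the contraction estimate and the invariant ball $\sI = B_{M/(1-\gamma_p)}(0)$ are exactly the ``elaborations'' preceding the theorem (via Proposition \ref{prop4.1}, itself an application of Proposition \ref{prop2.1}), and the paper's own proof consists precisely of your third step, namely verifying \eqref{lipcond} by the geometric series with $\gamma_p$ in place of $s$ and then invoking Theorem \ref{th2.1}.
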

\begin{proof}
Only \eqref{lipcond} needs to be established. This, however, carries over directly from the proof of Theorem \ref{thm4.1} with $\gamma_p$ instead of $s$.
\end{proof}

The attractor $f^*:X\to \sfY$ whose existence is guaranteed by Theorem \ref{thm7.1} is called a \emph{non-stationary fractal function of class $\cL^p(\X,Y)$}.

\section*{Acknowledgment}

The author would like to thank Nira Dyn and David Levin for two visits to the mathematics department of Tel Aviv University where the mathematical ideas for this article originated.

\end{document}